\newtheorem{theorem}{Theorem}[section]
\newtheorem{lemma}[theorem]{Lemma}
\newtheorem{corollary}[theorem]{Corollary}
\newtheorem{proposition}[theorem]{Proposition}
\theoremstyle{definition}
\theoremstyle{remark}
\numberwithin{equation}{section}
\newcommand{\abs}[1]{\lvert#1\rvert}
\newcommand{\n}[1]{\Vert#1\Vert}
\newcommand{\C}{\mathbb C}
\newcommand{\D}{\mathbb D}
\newcommand{\R}{\mathbb R}
\newcommand{\U}{\mathbb U}
\begin{document}

\title{Cesaro Operators on the Hardy Spaces of the half-plane}

\author{Athanasios G. Arvanitidis }
\address{Department of Mathematics, University of Thessaloniki,
         54124 Thessaloniki, Greece}
\email{arvanit@math.auth.gr}
\thanks{The first author was supported with a graduate fellowship from Alexander S. Onassis Foundation.}

\author{Aristomenis G. Siskakis}
\address{Department of Mathematics, University of Thessaloniki,
         54124 Thessaloniki, Greece}
\email{siskakis@math.auth.gr}

\subjclass[2000]{Primary 47B38; Secondary 30D55, 47D03.}

\date{\today}

\keywords{Ces\`{a}ro operators, Hardy spaces, Semigroups, Composition
operators}

\begin{abstract}
In this article we  study  the Ces\`{a}ro
operator
$$
\mathcal{C}(f)(z)=\frac{1}{z}\int_{0}^{z}f(\zeta)\,d\zeta,
$$
and its  companion operator $\mathcal{T}$ on  Hardy spaces of the upper half plane.
We identify $\mathcal{C}$ and $\mathcal{T}$ as resolvents for  appropriate semigroups of composition operators and we find the norm and the spectrum in each case. The relation of
$\mathcal{C}$ and $\mathcal{T}$ with the corresponding Ces\`{a}ro operators on Lebesgue spaces $L^p(\R)$ of the boundary line  is also discussed.
\end{abstract}

\maketitle

\section{Introduction}
Let $\U=\{z\in \mathbb{C}:\textrm{Im}(z)>0\}$ denote the upper
half of the complex plane. For $0 < p<\infty,$ the Hardy space
$H^{p}(\U)$ is the space of analytic functions
$f:\U\rightarrow\mathbb{C}$ for which
$$
\n{f}_{H^{p}(\U)}=\sup_{y>0}\Bigl(\int_{-\infty}^{\infty}
\abs{f(x+iy)}^{p}dx\Bigr)^{\frac{1}{p}}<\infty.
$$
For $p=\infty$ we denote by $H^{\infty}(\U)$ the space of all
bounded analytic functions on $\U$ with the supremum norm.

The spaces $H^{p}(\U)$, $1\leq p\leq\infty$, are Banach spaces and
for $p=2$, $H^{2}(\U)$ is a Hilbert space. For the rest of the
paper we use the notation $\n{f}_{p}$ for $\n{f}_{H^{p}(\U)}$.

Let $1\leq p<\infty$ and suppose $f\in H^{p}(\U)$. Then $f$
satisfies the growth condition
\begin{equation}\label{growth estimate of H^p}
\abs{f(z)}^{p}\leq \frac{C\n{f}_{p}^{p}}{\textrm{Im}(z)}, \qquad
z\in \U,
\end{equation}
where $C$ is a constant.
Further the limit $\lim_{y\rightarrow0}f(x+iy)$ exists for almost
every $x$ in  $\mathbb{R}$ and we may define the boundary function
on $\mathbb{R}$, denoted by $f^*$, as
$$
f^*(x)=\lim_{y\rightarrow0}f(x+iy).
$$
This function is p-integrable and
\begin{equation}\label{norm of H^p}
\n{f}_{p}^{p}=\int_{-\infty}^{\infty}\abs{f^*(x)}^{p}dx.
\end{equation}
Thus $H^{p}(\U)$ can be viewed as a subspace of
$L^{p}(\mathbb{R})$. For more details on Hardy spaces
$H^p(\mathbb{U})$ see \cite{Du}, \cite{Ga}, \cite{Ho}.

For $f\in L^{p}(\mathbb{R})$ the well known
 Ces\`{a}ro transformation is defined by
\begin{equation}\label{S}
\mathtt{C}(f)(x)=\frac{1}{x}\int_0^x f(u)\,du
\end{equation}
for $x\in \R$, with an appropriate convention if $x=0$. This defines a bounded operator on $L^{p}(\mathbb{R})$ for $p>1$ \cite{BHS}. In particular if $f^{\ast} \in L^p(\R)$ is the boundary function of $f\in H^p(\U)$ the question arises whether the transformed function $\mathtt{C}(f^{\ast})$ is also the boundary function of an  $f\in H^p(\U)$.

In this article we consider the half-plane version of the Ces\`{a}ro operator, which is formally defined
$$
\mathcal{C}(f)(z)=\frac{1}{z}\int_{0}^{z}f(\zeta)d\zeta, \quad
f\in H^{p}(\U).
$$
It turns out that this formula defines an analytic function on $\U$ for each
$f\in H^{p}(\U)$ and the resulting operator is bounded on $H^p(\U),\, p>1$. In addition the companion operator
$$
\mathcal{T}(f)(z)=\int_{z}^{\infty}\frac{f(\zeta)}{\zeta}d\zeta,
\quad f\in H^{p}(\U),
$$
is also shown to be bounded on $H^p(\U)$ for $p\geq 1$. We find the norm and the spectrum of $\mathcal{C}, \mathcal{T}$ and we show that, for the boundary functions, we have  $\mathcal{C}(f)^{\ast}=\mathtt{C}(f^{\ast})$. The whole discussion is based on the observation that $\mathcal{C}$ and $\mathcal{T}$ can be obtained as resolvent operators for appropriate strongly continuous semigroups of simple composition operators on $H^p(\U)$.

\section{Related semigroups, strong continuity.}\label{The group of composition
operators}

For each $t\in\mathbb{R}$ consider the analytic self maps of $\mathbb{U}$
$$
\phi_{t}(z)=e^{-t}z,  \quad z\in\U,
$$
and for $1\leq p<\infty$, the corresponding weighted composition operators on $H^p(\U)$,
$$
T_{t}(f)(z)=e^{-\frac{t}{p}}f(\phi_t(z)), \quad f\in H^p(\U).
$$
For   $f\in H^p(\U)$ we have
\begin{align*}
\n{T_t(f)}_{p}^{p}&= e^{-t}\sup_{y>0}\Bigl(\int_{-\infty}^{+\infty}
\abs{f(e^{-t}x+ie^{-t}y)}^{p}dx\Bigr)\\
&=\sup_{v>0}\Bigl(\int_{-\infty}^{+\infty}\abs{f(u+iv)}^{p}du\Bigr)\\
&=\n{f}_{p}^{p},
\end{align*}
thus  each $T_{t}$ is an isometry on $H^{p}(\U)$. Further it is
easy to see that the family $\{T_{t}\}$ satisfies
$T_{t}T_{s}=T_{t+s}$ for each $t,s\in \R$ and $T_0=I$ the identity
operator, so $\{T_{t}\}$  is a group of isometries. We will use
this group or the positive and negative semigroups  $\{T_{t}, \
t\geq0\}$ and $\{T_{-t}, \ t\geq0\}$ in our study of the operators
$\mathcal{C}$ and $\mathcal{T}$.

\begin{proposition}\label{strong continuity}
For $1\leq p<\infty$  the group $\{T_{t}\}$ is strongly continuous on $H^{p}(\U)$.
The infinitesimal generator $\Gamma$ of $\{T_{t}\}$ is given by
$$
\Gamma(f)(z)=-zf'(z)-\frac{1}{p}f(z),
$$
and its domain is $D(\Gamma)= \{f\in H^{p}(\U): zf'(z)\in H^{p}(\U)\}.
$
\end{proposition}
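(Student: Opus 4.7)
The plan is to obtain strong continuity by the usual density plus uniform bound argument, then compute the generator by formal differentiation and verify both inclusions in the domain identification.

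First I would prove strong continuity. Since each $T_t$ is an isometry (hence $\sup_{|t|\leq 1}\n{T_t}=1$), it suffices to establish $\lim_{t\to 0}\n{T_t f-f}_p=0$ on a dense subspace of $H^p(\U)$ and extend by a standard $3\varepsilon$ argument. A convenient dense class is the set of finite linear combinations of reproducing-type functions $f_w(z)=(z-\bar w)^{-k}$ with $w\in\U$ and $k$ large enough so that $f_w\in H^p(\U)$; alternatively one can use functions that extend continuously to $\overline{\U}\cup\{\infty\}$ with controlled decay, for which pointwise convergence $T_t f(z)\to f(z)$ combined with the growth estimate \eqref{growth estimate of H^p} and the dominated convergence theorem applied to the boundary values yields $L^p(\R)$ convergence (and hence $H^p(\U)$ convergence via the identification \eqref{norm of H^p}). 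For the model functions one in fact has an explicit pointwise formula that makes the $t\to 0$ limit transparent.

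Next, to identify the generator, I would differentiate $T_t f(z)=e^{-t/p}f(e^{-t}z)$ formally at $t=0$, which gives the candidate
$$
\Gamma(f)(z)=-zf'(z)-\frac{1}{p}f(z).
$$
To verify this, write
$$
\frac{T_t f(z)-f(z)}{t}=\frac{e^{-t/p}-1}{t}\,f(z)+e^{-t/p}\,\frac{f(e^{-t}z)-f(z)}{t},
$$
and use $f(e^{-t}z)-f(z)=-(1-e^{-t})\int_0^1 zf'(\sigma_s(t,z))\,ds$ with $\sigma_s(t,z)=(1-s(1-e^{-t}))z\in\U$. The first summand converges in $H^p$-norm to $-\tfrac{1}{p}f$ since $f\in H^p(\U)$; the second summand, once the factor $\frac{1-e^{-t}}{t}\to 1$ is pulled out, is an averaged $H^p$-valued integral of $T_{s(t)}$-translates of the function $g(z):=zf'(z)$, where $s(t)=-\log(1-s(1-e^{-t}))\to 0$. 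Thus if $g\in H^p(\U)$, strong continuity of $\{T_t\}$ just established gives the $H^p$-norm limit $-zf'(z)$, and adding the two terms yields $\Gamma f$. This proves $\{f\in H^p(\U):zf'\in H^p(\U)\}\subseteq D(\Gamma)$ with the asserted formula.

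For the reverse inclusion, suppose $f\in D(\Gamma)$. Then $\frac{T_t f-f}{t}\to \Gamma f$ in $H^p(\U)$, hence also pointwise (using \eqref{growth estimate of H^p}, $H^p$-convergence implies uniform convergence on compact subsets of $\U$). The pointwise limit is $-zf'(z)-\tfrac{1}{p}f(z)$, so $\Gamma f(z)=-zf'(z)-\tfrac{1}{p}f(z)$, and since both $\Gamma f$ and $f$ lie in $H^p(\U)$, so does $zf'(z)$.

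The main obstacle is the first step: producing strong continuity at $t=0$ in a way that does not circularly invoke the generator. The cleanest route is to fix a dense subclass whose elements have boundary values that one can handle directly by dominated convergence on $\R$, and then pass through \eqref{norm of H^p} to transfer $L^p(\R)$-convergence back to $H^p(\U)$-convergence, at which point the isometry property of $T_t$ promotes density into full strong continuity.
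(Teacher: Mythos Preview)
Your proposal is correct in outline. For strong continuity you take a different route from the paper: you argue via a dense class of explicit functions (reproducing-type kernels or functions with controlled boundary decay) together with the uniform isometry bound, whereas the paper approximates an arbitrary $f$ by its vertical translates $f_s(z)=f(z+is)$, reduces to proving $\n{T_t f_s-f_s}_p\to 0$ for each fixed $s$, and then splits the resulting real-line integral into tails (controlled by integrability of $f_s$ on $\R$) and a bounded middle piece (handled by dominated convergence via the growth estimate). The paper's route is more self-contained, since it uses only the elementary fact $\n{f_s-f}_p\to 0$ rather than density of a specific rational class in $H^p(\U)$; your route is shorter once that density is taken for granted.

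For the generator, your reverse inclusion $D(\Gamma)\subseteq\{f:zf'\in H^p\}$ is exactly the paper's argument. For the forward inclusion the paper writes the cleaner identity $T_t f-f=\int_0^t T_s(F)\,ds$ with $F(z)=-zf'(z)-\tfrac{1}{p}f(z)$ and then invokes the general semigroup fact $\tfrac{1}{t}\int_0^t T_s F\,ds\to F$. Your split of the difference quotient reaches the same conclusion after a reparametrization, but note a small imprecision: the integrand $zf'(\sigma_s(t,z))$ is not literally a $T_\tau$-translate of $g(z)=zf'(z)$; with $e^{-\tau}=1-s(1-e^{-t})$ one has $zf'(e^{-\tau}z)=e^{\tau(1+1/p)}T_\tau g(z)$. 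The extra scalar factor also tends to $1$ as $t\to 0$, so the argument still goes through.
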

\begin{proof}
For the strong continuity we need to show
\begin{equation}\label{strong continuity norm}
\lim_{t\to 0}\n{T_t(f)-f}_p=\lim_{t\rightarrow 0}\n{e^{-\frac{t}{p}}f\circ\phi_{t}-f}_{p}=0
\end{equation}
for every $f\in H^{p}(\U)$. Given such an $f$ consider the functions
$$
f_{s}(z)=f(z+is), \quad s>0.
$$
It is easy to see that $f_{s}\in
H^{p}(\U)$ for every $s$ and we have
\begin{align*}
\n{T_t(f)-f}_{p}&\leq
\n{T_t(f)-T_t(f_s)}_p+\n{T_t(f_s)-f_s}_p+\n{f_s-f}_{p}\\
&\leq
(\n{T_{t}}+1)\n{f_s-f}_{p}+
\n{T_t(f_s)-f_s}_{p}\\
&= 2\n{f_s-f}_{p}+
\n{e^{-\frac{t}{p}}f_s\circ\phi_t-f_s}_{p}.
\end{align*}
Now it is clear that the boundary function of   $f_s(z)$ is $f(x+is)$ and the boundary function of $e^{-\frac{t}{p}}f_s(e^{-t}z)$ is $e^{-\frac{t}{p}}f(e^{-t}x+is)$ for all $x \in \R$. Since
$$
\n{f_s-f}^p_{p}=\int_{-\infty}^{\infty}|f(x+is)-f^*(x)|^p\,dx\to
0,
$$
as $s\to 0$ (see for example \cite[Theorem 11.4]{Du}), it suffices
to show
$$
\lim_{t\to 0}\n{e^{-\frac{t}{p}}f_s\circ\phi_t-f_s}_{p}=0
$$
for each $s$. For a fixed such  $s$ and $a>0$ (to be specified later) we have
\begin{align*}
\n{e^{-\frac{t}{p}}f_s\circ\phi_{t}-f_s}_{p}^p &=
\int_{-\infty}^{\infty}|e^{-\frac{t}{p}}f(e^{-t}x+is)-f(x+is)|^p\,dx\\
&=\int_{-\infty}^{-2a}|e^{-\frac{t}{p}}f(e^{-t}x+is)-f(x+is)|^p\,dx \\
&\,\,\,\,\,+\int_{-2a}^{2a}|e^{-\frac{t}{p}}f(e^{-t}x+is)-f(x+is)|^p\,dx\\
&\,\,\,\,\,+\int_{2a}^{\infty}|e^{-\frac{t}{p}}f(e^{-t}x+is)-f(x+is)|^p\,dx\\
&=I_1(t)+I_2(t)+I_3(t).
\end{align*}
To estimate the integral $I_1(t)$ we use the standard inequality
$$
|e^{-\frac{t}{p}}f(e^{-t}x+is)-f(x+is)|^p\leq 2^p(e^{-t}|f(e^{-t}x+is)|^p+|f(x+is)|^p)
$$
and the  change  of variable $u=e^{-t}x$ to obtain
\begin{align*}
I_1(t) &\leq 2^p\int_{-\infty}^{-2ae^{-t}}|f(u+is)|^p\,du+2^p\int_{-\infty}^{-2a}|f(x+is)|^p\,dx\\
&\leq 2^{p+1}\int_{-\infty}^{-a}|f(x+is)|^p\,dx,
\end{align*}
valid for $0<t<\log2$. An analogous estimate for $I_3(t)$ gives
$$
I_3(t)\leq 2^{p+1}\int_{a}^{\infty}|f(x+is)|^p\,dx,
$$
for $0<t<\log2$.

Let $\epsilon>0$ be given. Since $f_s \in H^p(\U)$ we can choose  $a>0$
such that for $0<t<\log2$,
$$
I_1(t)+I_3(t)\leq 2^{p+1}\int_{\R\setminus [-a,\, a]}|f(x+is)|^p\,dx  < \epsilon.
$$
We now deal with the second integral $I_2(t)$. Using the growth estimate  \eqref{growth
estimate of H^p} we have
\begin{align*}
|e^{-\frac{t}{p}}f(e^{-t}x+is)-f(x+is)|^p&\leq 2^p|e^{-\frac{t}{p}}f(e^{-t}x+is)|^p+2^p|f(x+is)|^p\\
&\leq 2^pC\frac{\n{e^{-\frac{t}{p}}f(e^{-t}x+is)}_p^p}{\mbox{Im}(e^{-t}x+is)}
+2^pC\frac{\n{f(x+is)}_p^p}{\mbox{Im}(x+is)}\\
&\leq 2^pC\frac{\n{f}_p^p}{ s}+2^pC\frac{\n{f}_p^p}{ s}\\
&=2^{p+1}C\frac{\n{f}_p^p}{s}.
\end{align*}
This says in particular that $|e^{-\frac{t}{p}}f(e^{-t}x+is)-f(x+is)|^p$ is a uniformly bounded
function of $x$ on the interval $[-2a, 2a]$ for each $t>0$. Since this function has pointwise limit $0$ as $t\to 0$,  an application of Lebesgue's
dominated convergence theorem gives
$$
\lim_{t\to 0}\int_{-2a}^{2a}|e^{-\frac{t}{p}}
f(e^{-t}x+is)-f(x+is)|^p\,dx=0.
$$
Putting together all previous estimates we conclude  $\lim_{t\to 0}\n{T_t(f)-f}_p=0$ and the strong continuity follows.

By definition the domain
$D(\Gamma)$ of $\Gamma$, consists of all $f\in H^p(\U)$ for which the limit
$
\lim_{t \to 0}\frac{e^{-\frac{t}{p}} f\circ\phi_t-f}{t}
$
exists in $H^p(\U)$ and
$$
\Gamma(f)=\lim_{t \to 0}\frac{e^{-\frac{t}{p}}f\circ\phi_t-f}{t}, \quad f\in D(\Gamma).
$$
The growth estimate (\ref{growth estimate of H^p}) shows that
convergence in the norm of $H^p(\U)$ implies in particular pointwise
convergence, therefore for each $f\in D(\Gamma)$,
\begin{align*}
\Gamma(f)(z)&=\lim_{t\to 0}\frac{e^{-\frac{t}{p}} f(e^{-t}z)-f(z)}{t}\\
&=\frac{\partial}{\partial t}\left(e^{-\frac{t}{p}}f(e^{-t}z)\right)\Bigr|_{t=0}\\
&=-zf'(z)-\frac{1}{p}f(z).
\end{align*}
This shows that $D(\Gamma)\subseteq \{f\in H^p(\U): zf'(z)\in H^p(\U)\}$.

Conversely let $f\in H^p(\U)$ such that $zf'(z)\in H^p(\U)$. Then
we can write for $z\in \U$,
\begin{align*}
T_t(f)(z)-f(z)&=\int_0^t\frac{\partial}{\partial s} \left(e^{-\frac{s}{p}}f(\phi_{s}(z))\right)\,ds\\
&=\int_{0}^{t}-e^{-\frac{s}{p}}\phi_{s}(z)f'(\phi_{s}(z))
-\frac{1}{p}e^{-\frac{s}{p}}f(\phi_s(z))\,ds\\
&=\int_{0}^{t}T_s(F)(z)\,ds,
\end{align*}
where  $F(z)=-zf'(z)-\frac{1}{p}f(z)$ is a function in
$H^{p}(\U)$. Thus
$$
\lim_{t\to 0}\frac{T_{t}(f)-f}{t}=\lim_{t\to
0}\frac{1}{t}\int_{0}^{t}T_{s}(F)\,ds.
$$
From the general theory of strongly continuous (semi)groups, for
$F\in H^p(\U)$  the latter limit exists in $H^p(\U)$ and is equal
to $F$. Thus
$$
\lim_{t\to 0}\frac{T_{t}(f)-f}{t}=F.
$$
This says that $D(\Gamma)\supseteq \{f\in H^p(\U): zf'(z)\in H^p(\U)\}$
completing the  proof.
\end{proof}

\vspace{0.4cm}\noindent
\textbf{Remark.} An  immediate corollary of the previous proposition
is that the two semigroups $\{T_{t}, \ t\geq0\}$ and $\{T_{-t}, \ t\geq0\}$
are strongly continuous on $H^{p}(\U)$ and their infinitesimal generators
are $\Gamma$ and $-\Gamma$ respectively.

\begin{lemma}\label{(z)^l}
Let $0<p<\infty$ and  $\lambda\in \mathbb{C}$, then \\
(i) $e_{\lambda}(z)=z^{\lambda}\notin H^{p}(\U)$.\\
(ii) $h_{\lambda}(z)=(i+z)^{\lambda}\in H^{p}(\U)$ if and only if
$\textrm{Re}(\lambda) <-\frac{1}{p}$.
\end{lemma}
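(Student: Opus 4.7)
The plan is to reduce both parts to the study of one-variable integrals of $(x^{2}+c^{2})^{p\,\mathrm{Re}(\lambda)/2}$ by observing that the imaginary part of $\lambda$ produces only a bounded factor on $\mathbb{U}$. Writing $\lambda=a+ib$ and using the principal branch of the logarithm on $\mathbb{U}$, I have $|z^{\lambda}|=|z|^{a}e^{-b\arg z}$ and $|(i+z)^{\lambda}|=|i+z|^{a}e^{-b\arg(i+z)}$. Since the argument of any point in the closed half-plane $\overline{\mathbb{U}}$ (or of $i+z$ for $z\in\overline{\mathbb{U}}$) lies in $[0,\pi]$, the factors $e^{-pb\arg z}$ and $e^{-pb\arg(i+z)}$ are bounded above and below by positive constants that depend only on $b$ and $p$. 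Hence the $H^{p}$ membership question reduces to the real exponent $a=\mathrm{Re}(\lambda)$.

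For part (i), I would compute, for fixed $y>0$,
\[
\int_{-\infty}^{\infty}|x+iy|^{pa}\,dx
=\int_{-\infty}^{\infty}(x^{2}+y^{2})^{pa/2}\,dx
=y^{pa+1}\int_{-\infty}^{\infty}(u^{2}+1)^{pa/2}\,du,
\]
via the substitution $u=x/y$. The inner integral converges only when $pa<-1$, and in that case the prefactor $y^{pa+1}$ blows up as $y\to 0^{+}$ (since $pa+1<0$). If on the other hand $pa\ge -1$, the inner integral already diverges at infinity. Either way the supremum over $y>0$ is infinite, and combined with the two-sided bound from the argument factor this gives $e_{\lambda}\notin H^{p}(\mathbb{U})$ for every $\lambda\in\mathbb{C}$.

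For part (ii), the same kind of calculation applies to $h_{\lambda}$, but with $y$ replaced by $y+1$, which is the crucial difference. Namely,
\[
\int_{-\infty}^{\infty}|x+i(y+1)|^{pa}\,dx
=(y+1)^{pa+1}\int_{-\infty}^{\infty}(u^{2}+1)^{pa/2}\,du.
\]
The inner integral is finite precisely when $pa<-1$, i.e.\ $\mathrm{Re}(\lambda)<-1/p$, and in that regime $(y+1)^{pa+1}$ is a decreasing bounded function of $y\ge 0$, so the supremum is attained at $y=0$ and is finite. Conversely, if $\mathrm{Re}(\lambda)\ge -1/p$, then already at any fixed $y>0$ the integral diverges, so $h_{\lambda}\notin H^{p}(\mathbb{U})$.

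The only minor obstacle is keeping track of the branch of $(i+z)^{\lambda}$ and making sure the constants absorbing the argument factor are uniform in $y$; but since $\arg(i+z)\in(0,\pi)$ uniformly for $z\in\overline{\mathbb{U}}$, this causes no trouble. The computation for $p=\infty$ is not needed here since the statement is for $0<p<\infty$.
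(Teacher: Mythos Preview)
Your argument is correct and self-contained. The observation that $|z^{\lambda}|=|z|^{a}e^{-b\arg z}$ with $\arg z\in(0,\pi)$ on $\mathbb{U}$ cleanly reduces both parts to the real-exponent case, and the scaling computation $\int_{-\infty}^{\infty}(x^{2}+c^{2})^{pa/2}\,dx=c^{pa+1}\int_{-\infty}^{\infty}(u^{2}+1)^{pa/2}\,du$ then settles everything: for $e_{\lambda}$ the factor $y^{pa+1}$ blows up as $y\to 0^{+}$ exactly when the $u$-integral is finite, while for $h_{\lambda}$ the shift $y\mapsto y+1$ removes that obstruction.

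The paper takes a different route. It transfers the problem to the unit disc via the Cayley map $\psi(z)=i\frac{1+z}{1-z}$, using the equivalence $f\in H^{p}(\mathbb{U})\iff \psi'(z)^{1/p}f(\psi(z))\in H^{p}(\mathbb{D})$, and then reduces to the known criterion that $(1-z)^{\nu}\in H^{p}(\mathbb{D})$ if and only if $\mathrm{Re}(\nu)>-1/p$. In fact the paper explicitly remarks that the lemma ``can be proved by direct calculation of the norms'' before presenting the conformal-map argument, so your approach is precisely the one the authors allude to but do not carry out. Your computation is more elementary in that it needs no facts about disc Hardy spaces; the paper's version is shorter once those facts are in hand, and has the minor advantage of making the singularity structure (one singularity for $h_{\lambda}$, two competing ones for $e_{\lambda}$) visually transparent in the factored forms $(1-z)^{-2/p-\lambda}$ and $(1+z)^{\lambda}(1-z)^{-2/p-\lambda}$.
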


\begin{proof} This can be proved by direct calculation of the norms.
We give an alternative
argument involving the  following well known characterization of membership
in the Hardy space of the half plane. For a function $f$ analytic on $\U$,
$$
f\in H^{p}(\U)\quad \mbox{if and only if}\quad \psi'(z)^{1/p}f(\psi(z))\in H^{p}(\mathbb{D}),
$$
where $\psi(z)=i\frac{1+z}{1-z}$, a conformal map from the unit
disc $\mathbb{D}=\{|z|<1\}$ onto $\U$, and
$H^{p}(\mathbb{D})$ is the usual Hardy space of the disc. We find
\begin{equation}{\label{h}}
\psi'(z)^{1/p}h_{\lambda}(\psi(z))=\frac{c_1}{(1-z)^{\frac{2}{p}+\lambda}},
\end{equation}
and
\begin{equation}{\label{e}}
\psi'(z)^{1/p}e_{\lambda}(\psi(z))=c_2\frac{(1+z)^{\lambda}}{(1-z)^{\frac{2}{p}+\lambda}},
\end{equation}
where $c_1, c_2$ are nonzero constants.

Next recall that if $\nu$ is a complex number,   $(1-z)^{\nu}\in H^p(\mathbb{D})$
if and only if $\textrm{Re}(\nu)>-1/p$ (this follows for example from \cite[page 13, Ex. 1]{Du}).
Applying this to (\ref{h}) we obtain $h_{\lambda}\in H^p(\U)$ if and only if
$\textrm{Re}(-\frac{2}{p}-\lambda) >-1/p$  and this gives the desired conclusion.

In the case of
$e_{\lambda}$, the right hand side of (\ref{e}) belongs to  $H^p(\mathbb{D})$ if and only if
both terms $(1+z)^{\lambda}$ and $(1-z)^{-\frac{2}{p}-\lambda}$ belong to $H^p(\mathbb{D})$,
because each of the two terms is analytic and nonzero at the point where the other term has a singularity.
Thus for $e_{\lambda}$ to belong to $H^p(\U)$, both conditions
$\textrm{Re}(\lambda)>-1/p$ and $\textrm{Re}(-\frac{2}{p}-\lambda)>-1/p$ must be satisfied,
which is impossible.
\end{proof}

For an operator $A$ denote  by $\sigma_{\pi}(A)$ the set of eigenvalues of $A$, by $\sigma(A)$ the spectrum of $A$
and by $\rho(A)$ the resolvent set of $A$ on $H^p(\mathbb{U})$.

\begin{proposition}\label{spectr(Ã)} Let $1\leq p<\infty$ and consider $\{T_t\}$ acting on $H^p(\U)$. Then:  \\
(i) $\sigma_{\pi}(\Gamma)$ is empty.\\
(ii) $\sigma(\Gamma)=i\mathbb{R}$.\\
In particular $\Gamma$ is an unbounded operator.
\end{proposition}

\begin{proof}
(i) Let $\gamma$ be  an eigenvalue of $\Gamma$ and let $f$ be a corresponding eigenvector.
The eigenvalue equation $\Gamma(f)=\gamma  f$ is equivalent to the differential equation
$$
zf'(z)+(\gamma+\frac{1}{p})f(z)=0.
$$
The nonzero analytic solutions of this equation on $\U$ have the
form $f(z)=cz^{-(\gamma+\frac{1}{p})}$ with $c\ne 0$, which by Lemma \ref{(z)^l} are not in $H^p(\U)$.
It follows that $\sigma_{\pi}(\Gamma)=\emptyset$.

(ii) Because each $T_t$ is an invertible isometry its spectrum satisfies
$$
\sigma(T_t)\subseteq \partial\mathbb{D}.
$$
The spectral mapping theorem for strongly continuous groups (\cite[Theorem 2.3]{Pa}) says
$$
e^{t\sigma(\Gamma)}\subseteq\sigma(T_t).
$$
Thus if  $w \in \sigma(\Gamma)$, then  $e^{tw}\in \partial\mathbb{D}$, so that
$\sigma(\Gamma)\subseteq i\mathbb{R}$.
We will show that in fact $\sigma(\Gamma)=i\mathbb{R}$.

Let $\mu\in i\mathbb{R}$ and assume that $\mu\in \rho(\Gamma)$. Let  $\lambda=\mu +\frac{1}{p}$ and consider the function
$$
f_{\lambda}(z)=i\lambda(i+z)^{-\lambda-1}.
$$
Since $\textrm{Re}(-\lambda-1)=-1-1/p<-1/p$, this function is in $H^p(\U)$.
Since  $\mu\in \rho(\Gamma)$ the operator $R_{\mu}=(\mu-\Gamma)^{-1}: H^p(\U)\to H^p(\U)$ is  bounded.
 The image function  $g=R_{\mu}(f_{\lambda})$ satisfies the equation
$(\mu-\Gamma)(g)= f_{\lambda}$ or equivalently,
$$
(\mu+\frac{1}{p})g(z) +zg'(z)=f_{\lambda}(z), \quad z\in \U.
$$
Thus $g$ solves the differential equation
$$
\lambda g(z)+zg'(z)=i\lambda(i+z)^{-\lambda-1}, \quad z\in \U.
$$
It is easy to check that the general solution of this equation in $\U$ is
$$
G(z)= cz^{-\lambda} + (i+z)^{-\lambda}, \quad c\,\,\,\mbox{a constant}.
$$
Using the notation  of Lemma \ref{(z)^l} we find
$$
(\psi'(z))^{1/p}G(\psi(z))= \frac{c_1+cc_2(1+z)^{-\lambda}}{(1-z)^{\frac{2}{p}-\lambda}},
$$
where $c_1, c_2$ are nonzero constants. This last expression represents an analytic function on the unit disc which however,
by an argument similar to that in the proof of Lemma \ref{(z)^l},  is not  in the Hardy space  $H^p(\D)$
of the unit disc for any value of $c$ because
$\textrm{Re}(\lambda)=1/p$. Thus $G(z)$ is not in $H^p(\U)$ for any $c$ and this is a contradiction.
It follows that $\sigma(\Gamma)=i\mathbb{R}$, and this completes the proof.
\end{proof}

\section{The Ces\`{a}ro operators}\label{The Cesaro operator}

It follows from the above that when  $1<p<\infty$ the point
$\lambda_p=1-1/p$ is in the resolvent set of the generator
$\Gamma$. The resolvent operator $R(\lambda_p, \Gamma)$  is
therefore bounded. Let $f\in H^p(\U)$ and let $g=R(\lambda_p,
\Gamma)(f)$. It follows that $(\lambda_p-\Gamma)(g)=f$ or
equivalently
$$
(1-1/p)g(z)+zg'(z)+(1/p)g(z)=f(z).
$$
Thus $g$ satisfies the differential equation
$$
g(z)+zg'(z)=f(z), \quad z\in \U.
$$
Fix a point $w$ on the imaginary axis. Then
$$
zg(z)= \int_{w}^zf(\zeta)\,d\zeta +c, \quad z\in \U,
$$
with $c$ a constant. Now let $z=iy\to 0$  along the imaginary axis. Since  $g\in H^p(\U)$ with  $p>1$,  the estimate  (\ref{growth estimate of H^p})
implies that  $zg(z)\to 0$ therefore
$$
c=-\int_w^0f(\zeta)\,d\zeta
$$
(the existence of this integral is also a consequence of (\ref{growth estimate of H^p})). It follows that the integral of $f$ on the segment $[0, z]$ exists and we have
$$
g(z)=R(\lambda_p, \Gamma)(f)(z)
=\frac{1}{z}\int_0^zf(\zeta)\,d\zeta.
$$

\begin{theorem}\label{Cesaro} Let $1<p<\infty$ and let $\mathcal{C}$  be the operator defined by
$$
\mathcal{C}(f)(z)=\frac{1}{z}\int_{0}^{z}f(\zeta)\,d\zeta, \quad
f\in H^{p}(\U).
$$
Then $\mathcal{C}: H^p(\U)\to H^p(\U)$ is bounded. Further\newline
$$
\n{\mathcal{C}}=\frac{p}{p-1},
$$
and
$$
\sigma(\mathcal{C})=\{w\in\C:\abs{w-\frac{p}{2(p-1)}}=\frac{p}{2(p-1)}\}.
$$
\end{theorem}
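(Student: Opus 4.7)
The key observation, already developed in the paragraph preceding the theorem, is that $\mathcal{C}$ coincides with the resolvent $R(\lambda_p,\Gamma)=(\lambda_p-\Gamma)^{-1}$ at $\lambda_p=1-1/p$. Since the previous proposition gives $\sigma(\Gamma)=i\mathbb{R}$ and $\lambda_p>0$ whenever $p>1$, the point $\lambda_p$ lies in $\rho(\Gamma)$ and $\mathcal{C}$ is automatically bounded on $H^p(\U)$. Everything in the theorem therefore reduces to the standard analysis of a resolvent at a regular point of a known generator.

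For the norm I would use the Laplace representation of the resolvent, valid because $\{T_t\}_{t\geq 0}$ consists of isometries and so has growth bound zero:
$$
\mathcal{C}(f)=R(\lambda_p,\Gamma)(f)=\int_{0}^{\infty}e^{-\lambda_p t}T_t(f)\,dt,\qquad f\in H^p(\U).
$$
Minkowski's integral inequality, combined with $\n{T_t(f)}_p=\n{f}_p$, immediately yields the upper bound $\n{\mathcal{C}}\leq 1/\lambda_p=p/(p-1)$. For the matching lower bound I would avoid constructing near-extremizing test functions and instead invoke $\n{\mathcal{C}}\geq r(\mathcal{C})$ together with the spectral mapping theorem for resolvents of closed operators,
$$
\sigma(\mathcal{C})\setminus\{0\}=\{(\lambda_p-\mu)^{-1}:\mu\in\sigma(\Gamma)\}.
$$
Since $\sigma(\Gamma)=i\mathbb{R}$, the quantity $\sup_{s\in\mathbb{R}}|\lambda_p-is|^{-1}$ is attained at $s=0$ and equals $1/\lambda_p=p/(p-1)$, matching the upper bound exactly.

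The spectral identification is then a Möbius calculation. Writing $w=(\lambda_p-is)^{-1}=(\lambda_p+is)/(\lambda_p^2+s^2)$ and expanding, one checks directly that $|w-1/(2\lambda_p)|=1/(2\lambda_p)$, so the map $s\mapsto (\lambda_p-is)^{-1}$ carries $i\mathbb{R}$ bijectively onto the circle $|w-p/(2(p-1))|=p/(2(p-1))$ with the single point $0$ removed. To close up the circle I would argue that $0\in\sigma(\mathcal{C})$ directly: if $0$ lay in $\rho(\mathcal{C})$ then $\mathcal{C}^{-1}=\lambda_p-\Gamma$ would be a bounded everywhere-defined operator on $H^p(\U)$, contradicting the unboundedness of $\Gamma$ recorded in the previous proposition. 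This produces $\sigma(\mathcal{C})$ as the full circle asserted.

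The one step that could conceivably be awkward is the lower bound on the norm: producing approximate extremizers for $\mathcal{C}$ by hand on $H^p(\U)$ is not obvious, and the clean route is via $\n{\mathcal{C}}\geq r(\mathcal{C})$ combined with the spectral mapping theorem. Once the identification $\mathcal{C}=R(\lambda_p,\Gamma)$ is in place (which is exactly what the discussion preceding the theorem accomplishes), the remainder of the proof is a package of standard semigroup-theoretic consequences of the facts about $\Gamma$ already established in Section \ref{The group of composition operators}.
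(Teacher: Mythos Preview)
Your proof is correct and follows essentially the same route as the paper: identify $\mathcal{C}=R(\lambda_p,\Gamma)$, use the spectral mapping theorem for resolvents together with $\sigma(\Gamma)=i\mathbb{R}$ to obtain the circle, and combine the spectral radius lower bound with the resolvent estimate for the isometry semigroup to pin down the norm. The only cosmetic differences are that the paper quotes the Hille--Yosida--Phillips bound and the Dunford--Schwartz version of the spectral mapping theorem (which already includes the point $0$), whereas you reprove the former via the Laplace integral plus Minkowski and handle $0\in\sigma(\mathcal{C})$ by the direct unboundedness argument.
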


\begin{proof}
As found  above, for $1<p<\infty$,
$$
\mathcal{C}=R(\lambda_p, \Gamma), \qquad \lambda_p=1-1/p\in \rho(\Gamma).
$$
The spectral mapping theorem \cite[Lemma VII.9.2]{DS}
gives
\begin{align*}
\sigma(\mathcal{C})&=\{\frac{1}{\lambda_p-z}: z\in \sigma(\Gamma)\}\cup \{0\}\\
&= \{\frac{1}{1-1/p-ir}: r\in \mathbb{R}\}\cup \{0\}\\
&= \{w\in\C: |w-\frac{p}{2(p-1)}|=\frac{p}{2(p-1)}\},
\end{align*}
giving the spectrum of $\mathcal{C}$ on $H^p(\U)$.

Since the spectral radius of $\mathcal{C}$ is equal to $p/(p-1)$ it follows that
$$
\n{\mathcal{C}}\geq  \frac{p}{p-1}.
$$
On the other hand we can apply the Hille-Yosida-Phillips theorem
to the group $\{T_t\}$ of isometries \cite[Corollary
VIII.1.14]{DS} to obtain
$$
\n{\mathcal{C}}=\n{R(\lambda_p, \Gamma)}\leq
\frac{1}{1-\frac{1}{p}}=\frac{p}{p-1},
$$
and the proof is complete.
\end{proof}

\textbf{Remark.} It follows from Lemma (\ref{(z)^l}) that   $h(z)=(i+z)^{-2}$ belongs to $ H^1(\U)$. The transformed function
$-i(i+z)^{-1}=\frac{1}{z}\int_{0}^{z}h(\zeta)\,d\zeta$ is  analytic
on $\U$ but  does not belong to $H^1(\U)$. Thus $\mathcal{C}$ does
not take $H^{1}(\U)$ to $H^{1}(\U)$.

\vspace{0,5cm}

We now consider the negative part $\{T_t: t\leq 0\}$ of the group $\{T_t\}$ and we rename it $\{S_t\}$. That is, for $f\in H^p(\U)$,
$$
S_t(f)(z) =e^{\frac{t}{p}}f(\phi_{-t}(z))=e^{\frac{t}{p}}f(e^tz), \quad t\geq 0.
$$
It is clear that $\{S_t\}$ is strongly continuous on $H^p(\U)$ and that its generator is $\Delta=-\Gamma$. It follows from  Proposition \ref{spectr(Ã)} that
$$
\sigma(\Delta)=i\R, \qquad \sigma_{\pi}(\Delta)=\emptyset.
$$
Let $\mu_p=\frac{1}{p}\in \rho(\Delta)$, and consider the bounded resolvent operator $R(\mu_p,  \Delta)$. Let $f\in H^p(\U)$ and let
$g=R(\mu_p,  \Delta)(f)$. Then
$\mu_p g-\Delta(g)=f$ or equivalently
$$
zg'(z)=-f(z), \quad z\in \U.
$$
Fix a point $a\in \U$, then we have
$$
g(z)= -\int_a^z\frac{f(\zeta)}{\zeta}\,d\zeta + c
$$
with $c$ a constant. Now let $z\to \infty$  within a half-plane
$\textrm{Im}(z)\geq \delta >0$. Then $g(z)\to 0$ \cite[Corollary
1, page 191]{Du}, thus the following limit exists and gives the
value of $c$,
$$
c=\lim_{\underset{\textrm{Im}(w)\geq
\delta}{w\to\infty}}\,\int_a^{w}\frac{f(\zeta)}{\zeta}\,d\zeta.
$$
We therefore find
\begin{align*}\label{lim}
g(z)=&\int_z^a\frac{f(\zeta)}{\zeta}\,d\zeta+\lim_{\underset{\textrm{Im}(w)\geq \delta}{w\to\infty}}\,\int_a^w\frac{f(\zeta)}{\zeta}\,d\zeta\\
=&\lim_{\underset{\textrm{Im}(w)\geq
\delta}{w\to\infty}}\,\int_z^w\frac{f(\zeta)}{\zeta}\,d\zeta.
\end{align*}
We now show that the above limit exists unrestrictedly when
$w\to\infty$ within the half-plane $\U$. Indeed for $\delta>0$ we
have,
$$
\int_z^{w}\frac{f(\zeta)}{\zeta}\,d\zeta=\int_z^{w+i\delta}\frac{f(\zeta)}{\zeta}\,d\zeta
+\int_{w+i\delta}^{w}\frac{f(\zeta)}{\zeta}\,d\zeta = J_w+I_w.
$$
With the change of variable $\zeta=w+is\delta$, $0\leq s\leq 1$,
we have
\begin{align*}
|I_w|&=|-i\delta\int_0^1\frac{f(w+is\delta)}{w+is\delta}\,ds|\\
&\leq  \delta\int_0^1\frac{|f(w+is\delta)|}{|w+is\delta|}\,ds\\
&\leq  \frac{\delta}{|w|}\int_0^1|f(w+is\delta)|\,ds\\
&= \frac{1}{|w|}\int_{y}^{y+\delta}|f(x+iu)|\,du,
\end{align*}
where $w=x+iy$ and $u=y+s\delta$. For $p=1$ the Fej\'{e}r-Riesz
inequality for the half-plane \cite[Exercise 6, page 198]{Du}
implies, through the last inequality,
$$
|I_w|\leq \frac{1}{|w|}\int_0^{\infty}|f(x+iu)|\,du\leq
\frac{1}{2|w|}\n{f}_1 \to 0, \quad \mbox{as}\,\,w\to \infty,
$$
while for $p>1$, the growth estimate (\ref{growth estimate of
H^p}) gives
\begin{align*}
|I_w|&\leq C_p\left(\frac{1}{|w|}\int_{y}^{y+\delta}u^{-\frac{1}{p}}\,du \right) \n{f}_p\\
&\leq C_p'\frac{(y+\delta)^{1/q}}{|w|}\n{f}_p, \quad (1/q=-1/p+1),\\
&\leq C_p'\frac{(|w|+\delta)^{1/q}}{|w|}\n{f}_p \to 0, \quad
\mbox{as}\,\,w\to \infty.
\end{align*}
Thus in all cases $I_w\to 0$ and we have
$$
\lim_{w\to\infty}\int_z^{w}\frac{f(\zeta)}{\zeta}\,d\zeta=
\lim_{w\to\infty}J_w +\lim_{w\to\infty}I_w =g(z)
$$
i.e. the unrestricted limit exists. For this reason we can write
$$
g(z)= \int_z^{\infty}\frac{f(\zeta)}{\zeta}\,d\zeta.
$$

\begin{theorem}\label{results on T}
Let $1\leq p<\infty$ and  let $\mathcal{T}$  be the operator defined by
$$
\mathcal{T}(f)(z)=\int_{z}^{\infty}\frac{f(\zeta)}{\zeta}\,d\zeta,
\quad f\in H^{p}(\U).
$$
Then $\mathcal{T}: H^p(\U)\to H^p(\U)$ is bounded. Further\newline
$$
\n{\mathcal{T}}=p,
$$
and
$$
\sigma(\mathcal{T})=\{w\in\C:\abs{w-\frac{p}{2}}=\frac{p}{2}\}.
$$
\end{theorem}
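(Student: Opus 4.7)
The plan is to mirror the proof of Theorem \ref{Cesaro} almost verbatim, using the computation done immediately before the theorem statement. That computation shows that $\mathcal{T}$ coincides with the resolvent $R(\mu_p,\Delta)$ at $\mu_p = 1/p$ for the strongly continuous semigroup of isometries $\{S_t\}_{t\geq 0}$ with generator $\Delta = -\Gamma$. Since $\sigma(\Delta)=i\R$ by the Remark following Proposition \ref{strong continuity} together with Proposition \ref{spectr(�)}, the point $1/p$ lies in $\rho(\Delta)$ for every $p\in[1,\infty)$, so the identification $\mathcal{T}=R(1/p,\Delta)$ immediately gives boundedness of $\mathcal{T}$ on $H^p(\U)$.

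For the spectrum I would apply the resolvent spectral mapping theorem \cite[Lemma VII.9.2]{DS}:
\begin{equation*}
\sigma(\mathcal{T}) \;=\; \Bigl\{\tfrac{1}{1/p - z} : z\in\sigma(\Delta)\Bigr\}\cup\{0\}
\;=\; \Bigl\{\tfrac{1}{1/p - ir} : r\in\R\Bigr\}\cup\{0\}.
\end{equation*}
The map $r\mapsto 1/(1/p-ir)$ is the standard parametrization of the circle through $0$ and $p$ with diameter the segment $[0,p]$ on the real axis, i.e.\ the circle $|w-p/2|=p/2$; adjoining $0$ (the limit as $r\to\pm\infty$) does not change the set, so $\sigma(\mathcal{T})=\{w\in\C:|w-p/2|=p/2\}$.

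For the norm I would argue by two inequalities, exactly as for $\mathcal{C}$. The spectral radius of $\mathcal{T}$ equals $p$, which gives the lower bound $\n{\mathcal{T}}\geq p$. For the upper bound, since each $S_t$ is an isometry, $\{S_t\}$ is in particular a contraction semigroup, so the Hille--Yosida--Phillips theorem \cite[Corollary VIII.1.14]{DS} applies to give
\begin{equation*}
\n{\mathcal{T}} = \n{R(1/p,\Delta)} \leq \frac{1}{\mathrm{Re}(1/p)} = p.
\end{equation*}

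I do not expect any serious obstacle: the heavy lifting (strong continuity of $\{T_t\}$ and hence of $\{S_t\}$, identification of the generator, computation of $\sigma(\Gamma)$, and the integral formula for $R(1/p,\Delta)$) has already been carried out in the preceding sections. The only point worth double-checking is that the argument covers the endpoint $p=1$, and it does: the preceding derivation of $\mathcal{T}(f)(z)=\int_z^\infty f(\zeta)/\zeta\,d\zeta$ used the Fej\'er--Riesz inequality precisely to handle the case $p=1$, and both the spectral mapping step and the Hille--Yosida estimate are insensitive to whether $p=1$ or $p>1$ because $1/p>0$ always lies in $\rho(\Delta)$.
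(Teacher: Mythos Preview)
Your proposal is correct and is essentially identical to the paper's own proof: the paper also identifies $\mathcal{T}=R(\mu_p,\Delta)$ with $\mu_p=1/p$, applies the spectral mapping theorem for the resolvent to obtain the circle, and combines the spectral radius lower bound with the Hille--Yosida--Phillips estimate for the isometry semigroup $\{S_t\}$ to get $\n{\mathcal{T}}=p$. Your remark about the endpoint $p=1$ is also in line with the paper, which handles it in the derivation preceding the theorem.
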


\begin{proof} As found above we have
$$
\mathcal{T}=R(\mu_p, \Delta), \quad \mu_p=1/p\in \rho(\Delta).
$$
From the  spectral mapping theorem we find
\begin{align*}
\sigma(\mathcal{T})&=\{\frac{1}{\mu_p-z}: z\in \sigma(\Delta)\}\cup\{0\}\\
&= \{\frac{1}{1/p-ir}: r\in \mathbb{R}\}\cup\{0\}\\
&= \{w\in\C: |w-\frac{p}{2}|=\frac{p}{2}\},
\end{align*}
giving the spectrum of $\mathcal{T}$ on $H^p(\U)$.

Since the spectral radius of $\mathcal{T}$ is equal to $p$ it follows that
$$
\n{\mathcal{T}}\geq  p.
$$
On the other hand we can apply the Hille-Yosida-Phillips theorem
to the semigroup $\{S_t\}$ of isometries  to obtain
$$
\n{\mathcal{T}}=\n{R(\mu_p, \Delta)}\leq \frac{1}{1/p}=p,
$$
and the proof is complete.
\end{proof}

Suppose now $1<p<\infty$ and let $q$ be the conjugate index, $1/p+1/q=1$.   Recall  the duality $(H^p(\U))^{*}=H^q(\U)$ which
is realized through the pairing
\begin{equation}\label{pairing}
\langle f, g\rangle
=\int_{-\infty}^{\infty}f^*(x)\overline{g^*(x)}\,dx.
\end{equation}
 For the semigroups $T_t(f)(z)= e^{-\frac{t}{p}}f(e^{-t}z)$ and $S_t(g)(z)=e^{\frac{t}{q}}g(e^{t}z)$ acting on $H^p(\U)$ and $H^q(\U)$ respectively we find
\begin{align}\label{adjoint semigroups}
\langle T_t(f), g\rangle
&=\int_{-\infty}^{\infty}e^{-\frac{t}{p}}f^*(e^{-t}x)\overline{g^*(x)}\,dx\nonumber\\
&=\int_{-\infty}^{\infty}f^*(x)\overline{e^{\frac{t}{q}}g^*(e^tx)}\,dx\\
&=\langle f, S_{t}(g)\rangle \nonumber.
\end{align}
Thus $\{T_t\}$ and $\{S_t\}$ are adjoints of each other. From the general theory of operator semigroups this relation of being adjoint, on reflexive spaces, is inherited by the infinitesimal generators and subsequently by the resolvent operators \cite[Corollaries  10.2 and 10.6]{Pa}. It follows that
$\mathcal{C}$ and $\mathcal{T}$ are adjoints of each other on the reflexive Hardy spaces of the half-plane.

\section{Boundary correspondence}\label{Boundary values - unitary equivalence}

 We now  examine the boundary correspondence between $\mathcal{C}$ and its real line version  $\mathtt{C}$, as well as between $\mathcal{T}$ and the corresponding real line operator $\mathtt{T}$ defined on $L^p(\R)$ by
\begin{equation*}
\mathtt{T}(f)(x)=\left\{
\begin{array}{ll}
    \int_{x}^{\infty}\frac{f(u)}{u}\,du, & \hbox{$x>0$,}
    \\ \\
   -\int_{-\infty}^x\frac{f(u)}{u}\,du, & \hbox{$x<0,$} \\
\end{array}
\right.
\end{equation*}
while $\mathtt{T}(f)(0)$ can be chosen arbitrarily. It is well known (and easy to prove) that $\mathtt{T}$ is bounded on $L^p(\R)$
for $p\geq 1$.

\begin{theorem}\label{Boundary} Consider the operators $\mathcal{C}$, $\mathcal{T}$ on the Hardy spaces $H^p(\U)$ and the operators
$\mathtt{C}$, $\mathtt{T}$ on the spaces $L^p(\R)$. Then\\
(i) For $1<p<\infty$ and $f\in H^{p}(\U)$,
$$
\mathcal{C}(f)^{\ast}=
\mathtt{C}(f^{\ast}).
$$
(ii) For $1\leq p<\infty$ and $f\in H^{p}(\U)$,
$$
\mathcal{T}(f)^{\ast}=\mathtt{T}(f^{\ast}).
$$
\end{theorem}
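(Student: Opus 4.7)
The plan is to use the resolvent representations $\mathcal{C}=R(\lambda_p,\Gamma)$ (with $\lambda_p=1-1/p$) and $\mathcal{T}=R(\mu_p,\Delta)$ (with $\mu_p=1/p$) established in the previous sections, and then transport them to the boundary through the isometric embedding $\iota:H^p(\U)\to L^p(\R)$, $\iota(f)=f^*$. Since $\{T_t\}$ and $\{S_t\}$ are semigroups of isometries (growth bound $0$) and both $\lambda_p>0$ (when $p>1$) and $\mu_p>0$ lie strictly to the right of this growth bound, the Hille--Yosida--Phillips theorem supplies the Bochner integral representations
$$
\mathcal{C}(f)=\int_0^{\infty}e^{-\lambda_p s}T_s(f)\,ds,\qquad \mathcal{T}(f)=\int_0^{\infty}e^{-\mu_p s}S_s(f)\,ds,
$$
convergent in $H^p(\U)$. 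Because $\iota$ is bounded it commutes with these integrals; inserting the explicit formulas $(T_sf)^*(x)=e^{-s/p}f^*(e^{-s}x)$ and $(S_sf)^*(x)=e^{s/p}f^*(e^sx)$ produces the candidate identities
$$
\mathcal{C}(f)^*(x)=\int_0^{\infty}e^{-s}f^*(e^{-s}x)\,ds,\qquad \mathcal{T}(f)^*(x)=\int_0^{\infty}f^*(e^sx)\,ds,
$$
as elements of $L^p(\R)$.

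The proof is then finished by a routine change of variable. Setting $u=e^{-s}x$ in the first integral transforms it, for each $x\neq 0$, into $(1/x)\int_0^x f^*(u)\,du=\mathtt{C}(f^*)(x)$; setting $u=e^sx$ in the second transforms it into $\int_x^{\infty}f^*(u)/u\,du$ when $x>0$ and into $-\int_{-\infty}^x f^*(u)/u\,du$ when $x<0$, which is exactly $\mathtt{T}(f^*)(x)$.

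I expect the main obstacle to be the measure-theoretic step of upgrading the abstract $L^p$-valued identity coming from the Bochner integral to a pointwise identity for almost every $x$. The cleanest way to handle this, avoiding the finer Bochner machinery, is a direct duality argument: for arbitrary $g\in L^q(\R)$ one computes
$$
\int_{\R}\mathcal{C}(f)^*(x)\overline{g(x)}\,dx=\int_0^{\infty}e^{-\lambda_p s}\int_{\R}(T_sf)^*(x)\overline{g(x)}\,dx\,ds,
$$
interchanges the order of integration by Fubini (justified by the isometry $\n{(T_sf)^*}_p=\n{f}_p$, H\"older's inequality, and the integrability of $e^{-\lambda_p s}$), and then uses the change of variables above to recognise the result as the pairing of $\mathtt{C}(f^*)$ with $g$; since $g$ is arbitrary, the almost-everywhere equality $\mathcal{C}(f)^*=\mathtt{C}(f^*)$ follows, and the same scheme handles $\mathcal{T}$. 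The asymmetry between (i) and (ii), that is $p>1$ in (i) versus $p\geq 1$ in (ii), is precisely the positivity requirement on $\lambda_p$ versus $\mu_p$ and is consistent with the remark after Theorem \ref{Cesaro} showing that $\mathcal{C}$ fails to act on $H^1(\U)$.
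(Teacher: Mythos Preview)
Your argument is correct, and it takes a genuinely different route from the paper. The paper proves both parts by a direct complex-analytic computation: it first reduces to the translates $f_\delta(z)=f(z+i\delta)$ via a triangle-inequality argument (using that $\|f_\delta-f\|_p\to 0$ and that both $\mathcal{C},\mathtt{C}$ are bounded), and then, because $f_\delta$ is analytic across $\R$, evaluates $\mathcal{C}(f_\delta)(z)$ and $\mathcal{T}(f_\delta)(z)$ by deforming the contour of integration onto the real axis plus short vertical segments, estimating the extra pieces with the growth bound \eqref{growth estimate of H^p} and (for $p=1$) the Fej\'er--Riesz inequality. Your approach instead stays within the semigroup framework already built in Sections~\ref{The group of composition operators}--\ref{The Cesaro operator}: you use the Laplace representation of the resolvent, push it through the isometry $f\mapsto f^*$, and identify the result with $\mathtt{C}(f^*)$, $\mathtt{T}(f^*)$ by Fubini and a substitution. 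This is slicker and avoids the contour estimates entirely; it also makes transparent why the range of $p$ differs in (i) and (ii), since the whole argument hinges only on $\lambda_p>0$, respectively $\mu_p>0$. The paper's method, on the other hand, is more elementary in that it needs no Bochner integration or semigroup theory beyond the boundedness of the operators, and it yields slightly more explicit information (e.g.\ the pointwise convergence $\mathcal{C}(f_\delta)(x+iy)\to\mathtt{C}(f_\delta^*)(x)$ at every $x\neq 0$, not just a.e.).
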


\begin{proof}
(i) For $f\in H^p(\U)$ and $\delta>0$ consider the  function
$$
f_{\delta}(z)=f(z+i\delta), \quad \textrm{Im}(z)>-\delta.
$$
It is clear that $f_{\delta} \in H^{p}(\U)$ and  $f_{\delta}^{\ast}(x)=f_{\delta}(x)$,  and we have
\begin{align*}
\n{\mathcal{C}(f)^{\ast}-\mathtt{C}(f^{\ast})}_{L^p(\R)}&\leq
\n{\mathcal{C}(f)^{\ast}-\mathcal{C}(f_{\delta})^{\ast}}_{L^p(\R)}+
\n{\mathcal{C}(f_{\delta})^{\ast}-
\mathtt{C}(f_{\delta}^{\ast})}_{L^p(\R)}\\
&\,\,\,\,\,\qquad \qquad \qquad\qquad\qquad+ \n{\mathtt{C}(f_{\delta}^{\ast})-\mathtt{C}(f^{\ast})}_{L^p(\R)}\\
&\leq
(\n{\mathcal{C}}+\n{\mathtt{C}})\n{f_{\delta}-f}_{H^p(\U)}+\n{\mathcal{C}(f_{\delta})^{\ast}-
\mathtt{C}(f_{\delta}^{\ast})}_{L^p(\R)}.
\end{align*}
We can make  $\n{f_{\delta}-f}_{H^p(\U)}$ as small as we wish by choosing $\delta$ close enough to $0$. Thus in order to prove that
$\mathcal{C}(f)^*(x)=\mathtt{C}(f^*)(x)$ a.e. on $\R$, it
suffices to show
$\mathcal{C}(f_{\delta})^*(x)=\mathtt{C}(f_{\delta}^*)(x)$, i.e.
$$
\mathcal{C}(f_{\delta})^*(x)=\frac{1}{x}\int_0^x f_{\delta}^{\ast}(u)\,du
$$
for almost all $x$. Now for $z=x+iy\in\U$, since  $f_{\delta}(z)$ is analytic on $\{\mbox{Im}(z)>-\delta\}$, its integral on the segment $[0, z]$ can be obtained by integrating over the path $[0,x]\cup[x,z]$, so we have
\begin{align*}
\mathcal{C}(f_{\delta})(z)&= \frac{1}{z}\int_0^z
f_{\delta}(\zeta)\,d\zeta\\
&= \frac{1}{z}\int_{[0,\,x]}f_{\delta}(\zeta)\,d\zeta+
\frac{1}{z}\int_{[x,\,z]}f_{\delta}(\zeta)\,d\zeta.
\end{align*}
If $x\ne 0$ then clearly the limit of the first integral as $y\to 0$ is
$$
\lim_{y\to 0}\frac{1}{z}\int_{[0,\,x]}f_{\delta}(\zeta)\,d\zeta=\frac{1}{x}\int_0^xf_{\delta}(u)\,du.
$$
The  limit of the second integral  vanishes. Indeed  since  $f\in H^p(\U)$, $f$  is bounded over every  half-plane $\{z: \mbox{Im}(z)\geq \delta\}$  and we find
$$
\sup_{\zeta\in [x,\,z]}|f_{\delta}(\zeta)|\leq
\sup_{\textrm{Im}(z)\geq \delta}|f(z)|=M<\infty,
$$
therefore
$$|\frac{1}{z}\int_{[x,\,z]}f_{\delta}(\zeta)\,d\zeta|\leq \frac{1}{|x|}(\sup_{\zeta\in [x,\,z]}|f_{\delta}(\zeta)|)y\leq \frac{M}{|x|}y \to 0$$ as $y\to 0$. It follows that
$$
 \mathcal{C}(f_{\delta})^{\ast}(x)=\lim_{y\to 0}\mathcal{C}(f_{\delta})(z)= \frac{1}{x}\int_0^xf_{\delta}(u)\,du=\frac{1}{x}\int_0^xf_{\delta}^{\ast}(u)\,du
$$
and the proof of (i) is complete.

(ii) We argue as in part (i) and use the same notation. If $f\in H^p(\U), 1\leq p<\infty$, let $f_{\delta}$ be defined as in part (i). Using the triangle inequality as in part (i) we see that it suffices to show that for almost all $x$,
$$
\mathcal{T}(f_{\delta})^{\ast}(x)=\mathtt{T}(f_{\delta}^{\ast})(x).
$$
Let  $z=x+iy\in \U$ with $x>0$ (the case $x<0$ is similar),  choose $s>0$, and consider the path $[z,x]\cup[x, x+s]\cup[x+s, z+s(1+i)]$ as an alternative path of integration to obtain
\begin{align*}
\mathcal{T}(f_{\delta})(z)&=\lim_{s\to\infty}\int_z^{z+s(1+i)}
\frac{f_{\delta}(\zeta)}{\zeta}\,d\zeta\\
&=\int_z^{x}\frac{f_{\delta}(\zeta)}{\zeta}\,d\zeta+
\lim_{s\to\infty}\left(\int_{x}^{x+s}\frac{f_{\delta}(\zeta)}{\zeta}\,d\zeta+
\int_{x+s}^{z+s(1+i)}\frac{f_{\delta}(\zeta)}{\zeta}\,d\zeta\right).
\end{align*}
For the first integral inside the limit it is clear that
$$
\lim_{s\to\infty}\int_{x}^{x+s}\frac{f_{\delta}(\zeta)}{\zeta}\,d\zeta= \int_{x}^{+\infty}\frac{f_{\delta}(u)}{u}\,du.
$$
Write  $I(s)=\int_{x+s}^{z+s(1+i)}\frac{f_{\delta}(\zeta)}{\zeta}\,d\zeta$, the second integral inside the limit, then
\begin{align*}
|I(s)|
&=|\int_0^{y+s}\frac{f_{\delta}(x+s+it)}{x+s+it}i\,dt|\\
&\leq \int_0^{y+s}\frac{|f_{\delta}(x+s+it)|}{|x+s+it|}\,dt\\
&\leq \frac{1}{x+s}\int_0^{y+s}|f_{\delta}(x+s+it)|\,dt.
\end{align*}
If $p=1$ then the Fej\'{e}r -Riesz inequality for the upper half-plane gives
$$
\int_0^{y+s}|f_{\delta}(x+s+it)|\,dt\leq \int_0^{\infty}|f_{\delta}(x+s+it)|\,dt
\leq  \frac{1}{2}\n{f_{\delta}}_1
\leq \frac{1}{2}\n{f}_1,
$$
thus $I(s)\leq \frac{1}{2(x+s)}\n{f}_1 \to 0$ as $s\to \infty$. If $p>1$ then using  \eqref{growth estimate of H^p} we obtain
\begin{align*}
\int_0^{y+s}|f_{\delta}(x+s+it)|\,dt & \leq C_p\n{f_{\delta}}_p\int_0^{y+s}\frac{1}{t^{1/p}}\,dt\\
&\leq C_p'\n{f}_p(y+s)^{-\frac{1}{p}+1},
\end{align*}
which implies $I(s)\leq C_p'\n{f}_p\frac{(y+s)^{-\frac{1}{p}+1}}{x+s}\to 0$ as $s\to \infty$. We have shown
$$
\mathcal{T}(f_{\delta})(z)=
\int_z^{x}\frac{f_{\delta}(\zeta)}{\zeta}\,d\zeta+
\int_{x}^{+\infty}\frac{f_{\delta}(u)}{u}\,du
$$
for each $z=x+iy\in\U$. Taking the limit of the above as $y\to 0$ we find
$$
\mathcal{T}(f_{\delta})^{\ast}(x)=
\int_{x}^{+\infty}\frac{f_{\delta}(u)}{u}\,du
=\mathtt{T}(f_{\delta}^{\ast})(x)
$$
and the proof is complete.
\end{proof}

As a consequence we have the following

\begin{corollary} For $1\leq p<\infty$ let  $\mathcal{H}_p$ be the closed subspace of $L^p(\R)$ consisting of all  boundary functions $f^{\ast}$ of
$f\in H^p(\U)$. Then
$\mathtt{C}(\mathcal{H}_p)\subset \mathcal{H}_p$ for  $1<p<\infty$, and
$\mathtt{T}(\mathcal{H}_p)\subset \mathcal{H}_p$ for $1\leq p<\infty$.
\end{corollary}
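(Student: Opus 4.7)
The plan is to observe that this corollary is essentially a direct restatement of Theorem \ref{Boundary} combined with the boundedness results for $\mathcal{C}$ and $\mathcal{T}$ established in Theorems \ref{Cesaro} and \ref{results on T}. The identification of $\mathcal{H}_p$ with $H^p(\U)$ via the boundary value map is an isometric isomorphism (this is the content of \eqref{norm of H^p}), so transferring an inclusion at the analytic-function level to an inclusion at the $L^p(\R)$-level is automatic once the operators are known to intertwine.

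For the statement about $\mathtt{C}$, suppose $1<p<\infty$ and take any $F \in \mathcal{H}_p$. By definition $F = f^{\ast}$ for some $f \in H^p(\U)$. Theorem \ref{Cesaro} gives $\mathcal{C}(f) \in H^p(\U)$, so its boundary function $\mathcal{C}(f)^{\ast}$ belongs to $\mathcal{H}_p$. But part (i) of Theorem \ref{Boundary} says precisely
$$
\mathtt{C}(F) = \mathtt{C}(f^{\ast}) = \mathcal{C}(f)^{\ast} \in \mathcal{H}_p,
$$
which is the desired inclusion. The argument for $\mathtt{T}$ is identical: for $1 \leq p < \infty$ and $F = f^{\ast} \in \mathcal{H}_p$, Theorem \ref{results on T} provides $\mathcal{T}(f) \in H^p(\U)$, and part (ii) of Theorem \ref{Boundary} gives $\mathtt{T}(F) = \mathcal{T}(f)^{\ast} \in \mathcal{H}_p$.

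There is really no obstacle here, since all the work has been done in the previous section; the only thing worth mentioning is that the closedness of $\mathcal{H}_p$ inside $L^p(\R)$ (which is part of the hypothesis) is a standard consequence of the fact that the boundary-value map $f \mapsto f^{\ast}$ is an isometry from $H^p(\U)$ into $L^p(\R)$ and $H^p(\U)$ is complete, so that its image is automatically closed. No further calculation is required.
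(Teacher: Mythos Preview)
Your argument is correct and is exactly the immediate deduction the paper intends; the paper does not even supply a written proof but simply states the corollary as a consequence of Theorem \ref{Boundary} together with the boundedness of $\mathcal{C}$ and $\mathcal{T}$ on $H^p(\U)$.
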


\bibliographystyle{amsalpha}

\end{document}